\newtheorem{theorem}{Theorem}
\newtheorem{remark}{Remark}
\begin{document} 

\title{A Mixed-Binary Convex Quadratic Reformulation for Box-Constrained
Nonconvex Quadratic Integer Program
\thanks{This research was supported  by National Natural
Science Foundation of China under grant
91130019/A011702,
and by the fund of State Key Laboratory of
Software Development Environment under grant SKLSDE-2013ZX-13.}
}


\author{\small Yong Xia \\
{\small State Key Laboratory of Software Development
              Environment, LMIB of the Ministry of Education, }\\
{\small School of Mathematics and System Sciences, Beihang University, Beijing 100191, P. R. China}\\
{\small dearyxia@gmail.com }\\
\small  Ying-Wei Han \\
{\small School of Economics and Management, Beihang University, Beijing,
100191, P. R. China}\\
{\small ywwh@gmail.com} }

\date{Received: date / Accepted: date}

\maketitle

\begin{abstract}
In this paper, we propose a mixed-binary convex quadratic programming reformulation for the box-constrained nonconvex quadratic integer program
and then implement IBM ILOG CPLEX 12.6 to solve the new model.
Computational results demonstrate that our approach clearly outperform the very recent state-of-the-art solvers.
\end{abstract}
{\bf Keywords}: Quadratic Integer Programming \ Convex Quadratic Programming \  Semidefinite Programming

\section{Introduction.}  \label{sect1}
We study the box-constrained nonconvex quadratic integer program:
\begin{eqnarray*}
{\rm (P)}~&\min&  x^TQx+c^Tx\\
&{\rm s.t.}& l_i\le x_i\le u_i, ~i=1,\ldots,n,\\
&&x\in Z^n,
\end{eqnarray*}
where $Z^n$ denotes the set of $n$-dimensional vectors with integer entries, $l,u\in Z^n$ and $l_i<u_i$ for $i=1,\ldots,n$. Problem (P) includes
the binary quadratic program as a special case (i.e., $l_i=0$ and $u_i=1$ for $i=1,\ldots,n$) and hence it is in general NP-hard.

When $Q\succeq 0$ (i.e., $Q$ is positive semidefinite), the continuous relaxation is a convex program and provides an easy-to-solve lower bound of (P). In literature, there are
a few solvers to globally solve (P), see \cite{B08,B12}. Moreover, some softwares developed for integer nonlinear optimization can also globally solve (P), for example, IBM ILOG CPLEX 12.6. However,
CPLEX fails to solve (P) when $Q\not\succeq 0$. Actually, in this case, even the continuous relaxation of (P) remains NP-hard \cite{P91}. Very recently, in order to solve (P) with a nonconvex objective, two efficient branch-and-bound approaches based on semidefinite programming (SDP) relaxation \cite{B13b} and ellipsoidal relaxation \cite{B13a} are proposed, denoted by Q-MIST and GQIP, respectively. Computational results in \cite{B13a} show that they outperform existing solvers such as COUENNE \cite{B09} and BARON \cite{S10}.

Our contribution in this paper is to propose a mixed-binary convex quadratic programming reformulation of (P). Consequently, (P) now can be solved by IBM ILOG CPLEX  12.6.
The computational results demonstrate the high efficiency of our reformulation.

The paper is organized as follows. In Section 2, we present a mixed-binary convex quadratic programming reformulation of (P). In section 3, we report the    experimental
results. Conclusions are made in Section 4.

Throughout the paper, $v(\cdot)$ denotes the optimal value of
the problem $(\cdot)$. Notation $A\succeq 0$ implies that the matrix
$A$ is positive semidefinite.  For a
vector $a$, ${\rm Diag}(a)$ denotes
the diagonal matrix with $a$ being  its diagonal vector. $a\geq 0$ indicates that $a$ is componentwise nonnegative. For a real value $b$, $\lfloor b\rfloor$ is the largest integer less than or equal to $b$.

\section{A Mixed-Binary Convex Quadratic Reformulation}
In this section, we first present a family of mixed-binary quadratic programming (MBQP) reformulations of (P) and then identify the ``best'' MBQP which has the tightest continuous relaxation and the objective is convex.

Define
\begin{eqnarray*}
&&I=\left\{i\in\{1,\ldots,n\}:~\frac{u_i+l_i}{2}\in Z\right\},~J=\left\{i\in\{1,\ldots,n\}:~\frac{u_i+l_i}{2}\not\in Z\right\},\\
&&m_i=\left\{\begin{array}{ll}\frac{u_i-l_i}{2},&{\rm if}~i\in I\\
 u_i-l_i,& {\rm if}~i\in J\end{array}\right. ,~
\widetilde{x}_i=\left\{\begin{array}{ll}x_i-\frac{u_i+l_i}{2},&{\rm if}~i\in I,\\
2x_i-(u_i+l_i),& {\rm if}~i\in J.\end{array}\right.
\end{eqnarray*}
It is trivial to verify that (P) is equivalent to
\begin{eqnarray*}
{\rm (\widetilde{P})}~&\min&  \widetilde{x}^TQ\widetilde{x}+c^T\widetilde{x}\\
&{\rm s.t.}& -m_i\le \widetilde{x_i}\le m_i,~\widetilde{x}_i\in Z, ~\forall i\in I,\\
&& -m_i\le \widetilde{x_i}\le m_i,~\frac{\widetilde{x}_i-1}{2}\in Z, ~\forall i\in J.
\end{eqnarray*}

\begin{theorem} For any $\theta \in R^n$, ${\rm (\widetilde{P})}$ is equivalent to the following  linearly constrained
 mixed-binary quadratic program in the sense that both optimal objective values are equal and both share the same optimal solution $\widetilde{x}$:
\begin{eqnarray}
{\rm(MBQP_\theta)} &\min& \widetilde{x}^TQ\widetilde{x}+c^T\widetilde{x}+\sum_{i\in I} \theta_i\left(\widetilde{x}_i^2-\sum_{k=1}^{m_i} k^2 y_{ik}\right) \nonumber\\
&&~+\sum_{i\in J} \theta_i\left(\widetilde{x}_i^2-\sum_{k=0}^{(m_i-1)/2} (2k+1)^2y_{ik}
\right)\\
&{\rm s.t.}&
-\sum_{k=1}^{m_i} ky_{ik}\le \widetilde{x}_i\le
\sum_{k=1}^{m_i} ky_{ik},~\forall i\in I, \label{xy1}\\
&& z_i\le\sum_{k=1}^{m_i} y_{ik}\le 1,~\forall i\in I,\label{xy2}\\
&& \sum_{k=1}^{m_i} ky_{ik}-\widetilde{x}_i\le 2m_iz_i,~\forall i\in I, \label{z1}\\
&& \sum_{k=1}^{m_i} ky_{ik}+\widetilde{x}_i\le 2m_i(1-z_i),~\forall i\in I,\label{z2}\\
&&-\sum_{k=0}^{(m_i-1)/2} (2k+1)y_{ik}\le \widetilde{x}_i\le
\sum_{k=0}^{(m_i-1)/2} (2k+1)y_{ik},\forall i\in J,\label{j1}\\
&&  \sum_{k=0}^{(m_i-1)/2}y_{ik}=1,~\forall i\in J,\label{j2}\\
&& \sum_{k=0}^{(m_i-1)/2} (2k+1)y_{ik}-\widetilde{x}_i\le 2m_iz_i,~\forall i\in J,\label{z3}\\
&& \sum_{k=0}^{(m_i-1)/2} (2k+1)y_{ik}+\widetilde{x}_i\le 2m_i(1-z_i),~\forall i\in J,\label{z4}\\
&&y_{ik}\in\{0,1\},\forall i,k;~z \in \{0,1\}^n.\label{xy3}
\end{eqnarray}
\end{theorem}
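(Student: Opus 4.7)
The plan is to show the two problems have the same optimal value and share an optimal $\widetilde{x}$ by establishing a lift-and-project correspondence: every feasible $\widetilde{x}$ of $(\widetilde{P})$ admits a choice of $(y,z)$ making $(\widetilde{x},y,z)$ feasible in $(\mathrm{MBQP}_\theta)$ with the same objective value, and conversely every feasible $(\widetilde{x},y,z)$ of $(\mathrm{MBQP}_\theta)$ has its $\widetilde{x}$-projection feasible in $(\widetilde{P})$ with the same objective. The key observation driving everything is that on the intended encoding, the $y$-variables exactly capture $\widetilde{x}_i^2$, so the $\theta$-penalty term vanishes identically on feasible points and the $\theta$ vector plays no role in the equivalence.

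For the forward direction, given any $\widetilde{x}$ feasible for $(\widetilde{P})$, I would define $(y,z)$ as follows. For $i\in I$, $\widetilde{x}_i$ is an integer in $[-m_i,m_i]$; set $y_{ik}=1$ iff $k=|\widetilde{x}_i|$ (with all $y_{ik}=0$ when $\widetilde{x}_i=0$), and $z_i=1$ iff $\widetilde{x}_i<0$. For $i\in J$, $\widetilde{x}_i$ is an odd integer in $[-m_i,m_i]$, so write $|\widetilde{x}_i|=2k^\ast+1$, set $y_{ik^\ast}=1$ and all other $y_{ik}=0$, and again let $z_i$ encode the sign. A direct check verifies \eqref{xy1}--\eqref{xy3}: the bound constraints \eqref{xy1} and \eqref{j1} become $-|\widetilde{x}_i|\le\widetilde{x}_i\le|\widetilde{x}_i|$, while the two big-$M$ constraints \eqref{z1}--\eqref{z2} (resp.\ \eqref{z3}--\eqref{z4}) reduce to $0\le 2m_iz_i$ and $2|\widetilde{x}_i|\le 2m_i$ (or the symmetric pair when $z_i=1$). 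Under this assignment $\sum_k k^2 y_{ik}=\widetilde{x}_i^2$ on $I$ and $\sum_k(2k+1)^2y_{ik}=\widetilde{x}_i^2$ on $J$, so the penalty term is zero and the objective equals $\widetilde{x}^TQ\widetilde{x}+c^T\widetilde{x}$.

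The reverse direction is the main obstacle, because one must show that the big-$M$ constraints force $\widetilde{x}_i$ to the integer (resp.\ odd integer) encoded by $(y,z)$. The argument is a short case analysis. For $i\in I$, constraint \eqref{xy2} together with $y_{ik}\in\{0,1\}$ implies at most one $y_{ik}$ equals $1$; if all are zero then $z_i=0$ by \eqref{xy2} and \eqref{xy1} forces $\widetilde{x}_i=0$. If $y_{ik^\ast}=1$, then \eqref{xy1} gives $|\widetilde{x}_i|\le k^\ast$, and \eqref{z1}--\eqref{z2} read $k^\ast-\widetilde{x}_i\le 2m_iz_i$ and $k^\ast+\widetilde{x}_i\le 2m_i(1-z_i)$; with $z_i\in\{0,1\}$ these pin down $\widetilde{x}_i=k^\ast$ or $\widetilde{x}_i=-k^\ast$, hence $\widetilde{x}_i\in\{-m_i,\dots,m_i\}\cap Z$ and $\widetilde{x}_i^2=\sum_k k^2y_{ik}$. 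The same reasoning with \eqref{j2} (which now requires exactly one $y_{ik^\ast}=1$) applied to \eqref{j1}, \eqref{z3}, \eqref{z4} yields $\widetilde{x}_i=\pm(2k^\ast+1)$ on $J$, so $\widetilde{x}$ satisfies the parity and range requirements of $(\widetilde{P})$ and the penalty term again vanishes. Combining the two directions, the feasible sets coincide on $\widetilde{x}$ and the objectives agree pointwise, so the optimal values are equal and an optimal $\widetilde{x}$ is shared, completing the proof.
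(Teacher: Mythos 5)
Your proof is correct and follows essentially the same route as the paper: the same lift (encoding $|\widetilde{x}_i|$, resp.\ $(|\widetilde{x}_i|-1)/2$, in $y$ and the sign in $z$) for the forward direction, and the same use of the bound and big-$M$ constraints to pin down $\widetilde{x}_i=\pm k^\ast$ (resp.\ $\pm(2k^\ast+1)$) in the reverse direction. If anything, you are more careful than the paper where it matters, spelling out the $z_i\in\{0,1\}$ case analysis and the $\widetilde{x}_i=0$, all-$y_{ik}$-zero case for $i\in I$, which the paper leaves to the reader.
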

\begin{proof}
Let $\widetilde{x}$ be any feasible solution of $(\widetilde{P})$.
For $i=1,\ldots,n$, 
we define
\[
\widetilde{k}=\left\{\begin{array}{ll}|\widetilde{x}_i|,&{\rm if}~ i\in I,\\\frac{|\widetilde{x}_i|-1}{2},&{\rm if}~ i\in J,\end{array}\right.
 \widetilde{z}_i= \left\{\begin{array}{ll}0,&{\rm if}~\widetilde{x}_i\ge 0,\\1,&{\rm otherwise.}\end{array}\right.
\]
Then we define $\widetilde{y}_{i\widetilde{k}}=1$ and $\widetilde{y}_{ik}=0$ for all $k\neq \widetilde{k}$. It is not difficult to verify that $(\widetilde{x},\widetilde{y},\widetilde{z})$ is a feasible solution of  ${\rm(MBQP_\theta)}$ with the same objective value as that of $(\widetilde{P})$ at $\widetilde{x}$.

Now, let
$(\widetilde{x},\widetilde{y},\widetilde{z})$ be a feasible solution of  ${\rm(MBQP_\theta)}$.

For any $i\in I$, the constraints (\ref{xy1}), (\ref{z1})-(\ref{z2}) and (\ref{xy3}) imply that
 either
 $\widetilde{x}_i=\sum_{k=1}^{m_i} k\widetilde{y}_{ik}$ or $\widetilde{x}_i=-\sum_{k=1}^{m_i} k\widetilde{y}_{ik}$.
Moreover, under the constraints (\ref{xy2}) and (\ref{xy3}), there is at most one element in $\{\widetilde{y}_{ik}:~k=1,\ldots,m_i\}$
equal to $1$ and the others are zeros. It follows that $\widetilde{x}_i\in\{-m_i,\ldots,m_i\}$ and $\widetilde{x}_i^2=\sum_{k=1}^{m_i} k^2\widetilde{y}_{ik}$.

For any $i\in J$, the constraints (\ref{j1}), (\ref{z3})-(\ref{z4}) and (\ref{xy3}) imply that
 either
 $\widetilde{x}_i=\sum_{k=0}^{(m_i-1)/2} (2k+1)\widetilde{y}_{ik}$ or $\widetilde{x}_i=-\sum_{k=0}^{(m_i-1)/2} (2k+1)\widetilde{y}_{ik}$.
Furthermore, under the constraints (\ref{j1}) and (\ref{j2}), there is exactly one element in $\{\widetilde{y}_{ik}:~k=0,\ldots,(m_i-1)/2\}$
equal to $1$ and the others are zeros. It follows that $\widetilde{x}_i\in\{-m_i,-m_i+2,\ldots,m_i-2,m_i\}$ and $\widetilde{x}_i^2=\sum_{k=0}^{(m_i-1)/2} (2k+1)^2\widetilde{y}_{ik}$.

Therefore, $\widetilde{x}$ is a feasible solution of $(\widetilde{P})$ with the same objective value as that of ${\rm(MBQP_\theta)}$ at $(\widetilde{x},\widetilde{y},\widetilde{z})$. The proof is complete.
\end{proof}

\begin{remark}
 We notice that ${\rm(MBQP_\theta)}$ is still equivalent to $(\widetilde{P})$ without the constraints in the left-hand side of (\ref{xy2}). However, these constraints help us to reduce many  feasible solutions such as $(x,y,z)$ with $y_{i1}=\ldots=y_{im_i}=0$ and $z_i=1$ for $i\in I$.
\end{remark}

\begin{remark}
A natural representation of $\{x_i:~ l_i\le x_i\le u_i,~x_i\in Z\}$ may be
\[
\left\{x_i=\sum_{k=l_i}^{u_i}ky_{ik}:~\sum_{k=l_i}^{u_i}y_{ik}=1,~ y_{ik}\in \{0,1\}^{u_i-l_i+1}\right\}.
\]
But it need $u_i-l_i+1$ additional binary variables $y_k$. In our reformulation ${\rm(MBQP_\theta)}$, we introduce an additional binary variable $z_i$ and $\frac{u_i-l_i}{2}$ or $\frac{u_i-l_i+1}{2}$ additional binary variables $y_{ik}$ depending on whether $i\in I$ or $i\in J$.
\end{remark}

The choice of $\theta$ plays a great role in solving ${\rm(MBQP_\theta)}$.
For any $\theta$ such that $Q+{\rm Diag}(\theta)\succeq 0$,  ${\rm(MBQP_\theta)}$ has a convex objective function and hence can be solved by IBM ILOG CPLEX. A ``best'' choice of $\theta$, denoted by $\theta^*$, seems to be the one that maximizes the continuous relaxation of ${\rm(MBQP_\theta)}$. For convenience, we rewrite the
continuous relaxation of ${\rm(MBQP_\theta)}$ as
\begin{eqnarray*}
{\rm R(\theta)}~&\min& \widetilde{x}^T(Q+{\rm Diag}(\theta))\widetilde{x}+c^T\widetilde{x}- L(\theta)^Ty\\
&{\rm s.t.}& A\widetilde{x}+By\le a,
\end{eqnarray*}
where $y=(y_{ik},z_i)$, $a$, $A, B$ are vectors/matrices of appropriate dimension and $L(\theta)$ is a linear operator of $\theta$.
Moreover, since $Q+{\rm Diag}(\theta)\succeq 0$, according to strong duality, we have
\begin{eqnarray}
&&v({\rm R(\theta)}) \nonumber\\
&=&\min_{x,y}\max_{\lambda\ge0} \left\{
\widetilde{x}^T(Q+{\rm Diag}(\theta))\widetilde{x}+c^T\widetilde{x}- L(\theta)^Ty-
 \lambda^T(a-A\widetilde{x}-By)\right\}\nonumber\\
&=&\max_{\lambda\ge0} \min_{x,y} \left\{
\widetilde{x}^T(Q+{\rm Diag}(\theta))\widetilde{x}+(c+A^T\lambda)^T\widetilde{x}- (L(\theta)-B^T\lambda)^Ty -a^T\lambda\right\}\nonumber\\
&=&\max_{\lambda\ge0}~~ \tau-a^T\lambda\nonumber\\
&&{\rm s.t.}~~ \left[\begin{array}{ccc}
-\tau& \frac{1}{2}(c+A^T\lambda)^T & \frac{1}{2}(-L(\theta)+B^T\lambda)^T \\
\frac{1}{2}(c+A^T\lambda) & Q+{\rm Diag}(\theta)& 0\\
\frac{1}{2}(-L(\theta)+B^T\lambda)&0 &0  \end{array} \right]\succeq 0.\label{cc}
\end{eqnarray}
Consequently,  $\theta^*$ is obtained by solving an SDP:
\begin{equation}
\theta^*={\rm arg}\max_{Q+{\rm Diag}(\theta)\succeq 0} v({\rm R(\theta)})
=\max_{\lambda\ge0, ~ (\ref{cc}) }~ \left\{\tau-a^T\lambda\right\},\label{sdp}
\end{equation}
where  $Q+{\rm Diag}(\theta)\succeq 0$ is removed since it is already implied from (\ref{cc}).

\section{Experimental Results}

In this section, we report computational results of ${\rm(MBQP_{\theta^*})}$, where $\theta^*$ is obtained by solving the SDP (\ref{sdp})
using SEDUMI \cite{S99} within  CVX 1.2 \cite{G10}.

We used the same test bed as in \cite{B13a}. We randomly generated two kinds of testing instances,
ternary instances where $l_i=-1$ and $u_i=1$ for all
$i = 1,\ldots, n$ and the instances with larger domain: $l_i=-5$ and $u_i=5$ for all $i = 1,\ldots, n$. For each $n\in \{20,30,40,50\}$ and each
$p\in\{0, 0.1, \ldots, 1\}$ (which is a parameter to control the inertia of $Q$),
we created $10$ random instances from different random seeds.
That is, we totally solved $110$ instances for each size $n$. For each instance,
we first
uniformly generated $\lfloor p\cdot n\rfloor$ and $n-\lfloor p\cdot n\rfloor$ random values from  $[-1, 0]$ and $[0, 1]$, respectively. Let these $n$ values be the eigenvalues of $Q$.  Together with a random orthogonal basis
of $R^n$, we created the Hessian matrix $Q$.
Finally, all entries of $c$ were  uniformly generated at random
from $[-1, 1]$.

All experiments were implemented in Matlab R2010a and IBM ILOG CPLEX 12.6.
We set the time limit to one hour and the tolerance of CPLEX to be $10^{-6}$
(note that the default tolerance is $10^{-4}$). As in \cite{B13a}, instances not solved to proven optimality (with accuracy less
than $10^{-6}$) within one hour are considered failures.
We compare our method with the very recent two efficient solvers, Q-MIST \cite{B13b} and GQIP \cite{B13a}. As their codes are
not available, we just take the time and nodes explored by Q-MIST and GQIP
from \cite{B13a} and showed in Table \ref{t1} the platform and software
that are used by \cite{B13a} and us, respectively. Notice that our machine is slower than they used in \cite{B13a}.

\begin{table}[t]\caption{Platform and software used for comparison.}\label{t1}
\begin{center}
\begin{tabular}{lll}
\hline
Platform & Q-MIST~and~GQIP & ${\rm MBQP_{\theta^*} }$\\
\hline
Processor  &Intel Core2 (3.2GHz) & Pentium Dual-Core (2.93GHz)  \\
Operating System  &Linux& Windows XP  \\
Memory Size &4 GB &  2 GB \\
Software Used &C++, Fortran 90 & Matlab R2010a, CPLEX 12.6 \\
Precision  &$10^{-6}$ &$10^{-6}$\\
\noalign{\smallskip}\hline
\end{tabular}
\end{center}
\end{table}

 \begin{table}[t]\caption{Comparative results for ternary instances.}\label{t2}
\begin{center}
\begin{tabular}{llllll}
\hline
n & ALG  & SOLVED & MAX~TIME & AVG~TIME & AVG~\#NODES\\
\hline
20 &{\rm Q-MIST} &110 &1.0 &0.2 &53.5 \\
   &GQIP &110 &0.6 &0.1 &1914.5 \\
   &${\rm MBQP_{\theta^*} }$ & 110 &1.4 &0.1 &82.6 \\
\hline
30 &{\rm Q-MIST} &110 &11.0& 2.0 &199.7\\
   &GQIP &110 &1.1 &0.5 &16435.7\\
   &${\rm MBQP_{\theta^*} }$ & 110 &4.2  &0.9  &453.0 \\
   \hline
40 &{\rm Q-MIST} &110 &105.0 &15.9 &831.6\\
   &GQIP &110 &10.0 &1.8 &307492.7\\
   &${\rm MBQP_{\theta^*} }$ & 110 &5.9 &  2.1 &2115.3 \\
   \hline
50 &{\rm Q-MIST} &110 &1573.0 &184.0 &5463.7\\
   &GQIP &110 &306.7 &31.6  &8614806.8\\
   &${\rm MBQP_{\theta^*}}$ & 110 &22.5 & 5.2  &11204.9 \\
\noalign{\smallskip}\hline
\end{tabular}
\end{center}
\end{table}

 \begin{table}[t]\caption{Comparative results for instances with variable domain $\{-5,\ldots, 5\}$.}\label{t3}
\begin{center}
\begin{tabular}{llllll}
\hline
n & ALG  & SOLVED & MAX~TIME & AVG~TIME & AVG~\#NODES\\
\hline
20 &{\rm Q-MIST} &110 &6.0 &0.8 &138.6 \\
   &GQIP &110 &56.9 &1.9 &1568914.8\\
   &${\rm MBQP_{\theta^*} }$ & 110 &0.7 & 0.2 &251.4 \\
\hline
30 &{\rm Q-MIST} &110 &237.0 &17.8 &1115.5\\
   &GQIP &103 &3175.1 &256.6 &151889362.6\\
   &${\rm MBQP_{\theta^*} }$ & 110 &12.9 & 1.0 &1840.0 \\
   \hline
40 &{\rm Q-MIST} &109 &2431.0 &211.3 &5861.5\\
   &GQIP &32 &3501.8 &600.6 &254751492.5\\
   &${\rm MBQP_{\theta^*} }$ & 110 &128.4 &   9.3 &15971.7 \\
\noalign{\smallskip}\hline
\end{tabular}
\end{center}
\end{table}

We compare the computational results for each dimension $n$ in Tables \ref{t2} and \ref{t3}, where the third column (SOLVED) gives the
number of instances solved to proven optimality within one
hour, out of all $110$ instances , the last three columns (MAX TIME, AVG TIME, AVG \#NODES) list  the maximum time, average time, and average number of nodes explored for the successfully solved instances, respectively.

According to Tables \ref{t2} and \ref{t3}, our method has the similar computational performance as Q-MIST and GQIP for the ternary instances with $n=20,30,40$, but highly outperforms Q-MIST and GQIP for the ternary instances
with $n=50$ and for the instances with the variable domain $\{-5,\ldots, 5\}$. We also see that the average numbers of nodes explored by CPLEX for solving ${\rm(MBQP_{\theta^*})}$ lie between those of Q-MIST and GQIP. These observations are reasonable as the continuous relaxations of the subproblems of ${\rm(MBQP_{\theta^*})}$ are
linearly constrained convex quadratic programs (CQP), which are weaker than those of Q-MIST (which are SDPs,  less efficient to solve than CQP), but generally tighter than those of GQIP (which are nonconvex quadratic programs with an ellipsoidal constraint).

\section{Conclusions}
 In this paper, we propose a family of mixed-binary quadratic programming reformulations for the box-constrained nonconvex quadratic integer program, denoted by ${\rm(MBQP_\theta)}$, where $\theta$ is a parameter vector. A ``best'' choice of $\theta$, denoted by $\theta^*$, is set as the one that maximizes the continuous relaxation of ${\rm(MBQP_\theta)}$. It turns out that $\theta^*$ can be obtained by solving a semidefinite program (SDP). Interestingly, ${\rm(MBQP_{\theta^*})}$ has a convex quadratic objective and hence can be solved by IBM ILOG CPLEX 12.6. Computational results demonstrate that for instances with large $n$ or large variable domain, ${\rm(MBQP_{\theta^*})}$ highly outperforms the recent efficient solvers,
 Q-MIST \cite{B13b} and GQIP \cite{B13a}, which are branch-and-bound approaches based on SDP relaxation and ellipsoidal relaxation, respectively.


\bibliographystyle{elsarticle-num}





\end{document}